\newcommand{\abs}[1]{{\left|#1\right|}}
\newcommand{\norma}[1]{{\left\Vert#1\right\Vert}}
\theoremstyle{definition}
\newtheorem{definizione}{Definition}[section]
\theoremstyle{plain}
\newtheorem{teorema}{Theorem}[section]
\newtheorem{lemma}[teorema]{Lemma}
\newtheorem{corollario}[teorema]{Corollary}
\theoremstyle{definition}
\newtheorem{esempio}{Example}[section]
\newtheorem{oss}[esempio]{Remark}
\newtheorem{open}[esempio]{Open problem}
\DeclareMathOperator{\R}{\mathbb{R}}
\DeclareMathOperator{\Om}{\Omega}
\title{ A Rigidity Result for the Robin Torsion Problem}
\author{Alba Lia Masiello, Gloria Paoli*}
\date{}
\newcommand{\Addresses}{{
\bigskip 
  \footnotesize

  \textit{E-mail address}, A.L.~Masiello: \texttt{albalia.masiello@unina.it} 
   \medskip

 \textsc{Dipartimento di Matematica e Applicazioni ``R. Caccioppoli'', Universit\`a degli studi di Napoli Federico II, Via Cintia, Complesso Universitario Monte S. Angelo, 80126 Napoli, Italy.}
 
  \medskip 
  
   \textit{E-mail address}, G.~Paoli (Corresponding author)*: \texttt{gloria.paoli@fau.de} 
   
 \medskip
 
 \textsc{ Department of Data Science (DDS)
Chair in Dynamics, Control and Numerics (Alexander von Humboldt-Professorship),
Cauerstr. 11,
91058 Erlangen, Germany.}
 \medskip

 \par\nopagebreak 

}}
\begin{document}
\maketitle

\begin{abstract}
Let $\Omega \subset \mathbb{R}^2$ be an open, bounded and Lipschitz set. We consider the torsion problem for the Laplace operator associated to $\Omega$  with Robin boundary conditions. In this setting, we  study the equality case in the Talenti-type  comparison, proved in \cite{ANT}. We prove that the equality is achieved only if $\Omega$ is a disk and the torsion function $u$ is radial.
\newline
\newline
\textsc{Keywords:} Robin boundary conditions, Laplace operator, rigidity result, torsion problem, Talenti comparison. \\
\textsc{MSC 2020:}  35J05, 35J25,46E30.
\end{abstract}

\Addresses

\section{Introduction}
Let $\beta>0$ and let  $\Om\subset\mathbb{R}^2$ be an open,  bounded and Lipschitz set. We consider the following problem for the Laplace operator:
\begin{equation}
    \label{rob1}
    \begin{cases}
    -\Delta u=1 & \text{in } \Om\\
    \displaystyle{\frac{\partial u}{\partial\nu}+\beta u=0} & \text{on } \partial\Omega,
    \end{cases}
\end{equation}
where $\nu$ is the outer unit normal to $\partial \Omega$.
A function $u\in H^1(\Omega)$ is a weak solution to \eqref{rob1} if 
\begin{equation}
\label{weak-f}
    \int _\Omega  \nabla u \nabla \varphi \, dx+ \beta \int_{\partial\Omega}u\varphi \, d \mathcal{H}^1 = \int_{\Omega} \varphi \, dx, \quad \forall \varphi \in H^1(\Omega).
\end{equation}
Classical arguments, see e.g  \cite{Dac},  ensure that there exists a positive and unique weak solution to \eqref{rob1}, that we denote by $u$. So, we can define the Robin torsional rigidity of $\Omega$ as the $L^1-$norm of $u$:
\begin{equation*}
    T(\Om):=\int_{\Om} u \, dx,
\end{equation*}
or, equivalently, as the maximum of the following Rayleigh quotient:
\begin{equation*}
    T(\Omega) = \max_{\substack{\varphi\in H^1(\Omega)\\ \varphi\not \equiv0} }  \frac{\displaystyle{\left(\int_\Omega \abs{\varphi(x)} \, dx\right)^{2}}}{\displaystyle{\int_\Omega \abs{\nabla \varphi(x)}^2 \, dx}+ \beta\int_{\partial\Omega} \varphi^2 \, d\mathcal{H}^1}.
\end{equation*}
In \cite{BG} the authors prove that the Robin torsional rigidity  is maximum on balls among bounded and Lipschitz  sets of fixed Lebesgue measure and the proof of this Saint-Venant type inequality relays on reflection arguments (see  also \cite{bugia}).

In the recent paper \cite{ANT}, the authors obtain the same result using symmetrization techniques.
They establish a Talenti-type comparison result between suitable Lorentz norms of the solution to the following problems:
\begin{equation*}
    \begin{cases}
    -\Delta u=f \, &\text{in } \Om, \\
    \displaystyle{\frac{\partial u}{\partial\nu}+\beta u=0} & \text{on } \partial\Omega,
    \end{cases} \quad \qquad\quad
    \begin{cases}
    -\Delta v=f^\sharp \, & \text{in } \Om^\sharp, \\
    \displaystyle{\frac{\partial v}{\partial\nu}+\beta v=0} & \text{on } \partial\Omega^\sharp,
    \end{cases}
\end{equation*}
where $f\in L^2(\Omega)$,  $f^\sharp$ is the \emph{Schwartz rearrangement} of $f$ (see Definition \ref{rear}) and $\Omega^\sharp$ is the ball centered at the origin having the same measure as $\Om$. Moreover, in the case $f\equiv 1$, they  obtain the following comparison result  in any dimension
\begin{equation}\label{comp}
    \norma{u}_{L^p(\Om)}\le \norma{v}_{L^p(\Om^\sharp)}, \, \,\quad p=1,2. 
\end{equation}
 We observe that, for $p=1$, inequality  \eqref{comp} is exactly the Saint-Venant inequality proved in  \cite{BG}. 
It is still an open problem  to establish if, for $p\in(1,+\infty)$, the ball maximizes the $L^p$ norm of the torsion function among open, bounded and  Lipschitz sets (see  \cite[Open Problem $1$]{bugia}). A first evidence in this direction is provided in \cite{R}, where it is proved that the ball is a critical shape for every $L^p$ norm in dimension $n>2$.

On the other hand, in the case $n=2$, the Open Problem $1$ contained in  \cite{bugia} is solved in \cite{ANT} in the following  stronger version:  
\begin{equation}
\label{punt}
    u^\sharp(x)\le v(x) \quad \forall x \in \Omega^\sharp, 
\end{equation}
where $u^\sharp$ is the Schwartz rearrangement of the solution to \eqref{rob1} and $v$ is the solution to
\begin{equation}
    \label{rob2}
    \begin{cases}
    -\Delta v=1 & \text{in } \Om^\sharp\\
    \displaystyle{\frac{\partial v}{\partial\nu}+\beta v=0} & \text{on } \partial\Om^\sharp.
    \end{cases}
\end{equation}
This kind of results in the Robin boundary setting was generalized to nonlinear case in \cite{AGM}, to  anisotropic case in \cite{San2}, with  mixed boundary conditions in  \cite{ACNT}, in the case of the Hermite operator in \cite{nunzia2022sharp} and for Riemannian manifolds in \cite{cinesi}. 

The aim of the present  paper is to characterize the equality case in \eqref{punt}, indeed we prove that the Talenti-type comparison is rigid in the planar case.

\begin{teorema}\label{Teo1}
Let $\Om\subset\R^2$ be an open, bounded  and  Lipschitz set and let $\Om^\sharp$ be the ball centered at the origin and  having the same measure as $\Om$. Let $u$ be the solution to \eqref{rob1} and let $v$ be the solution to \eqref{rob2}. If  $u^\sharp(x)=v(x)$ for  all $x\in \Om^\sharp$, then $$\Om=\Om^\sharp+ x_0, \quad u(\cdot+x_0)=u^\sharp(\cdot).$$
\end{teorema}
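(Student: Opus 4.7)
The plan is to revisit the proof of the pointwise comparison~\eqref{punt} in \cite{ANT} and force every inequality appearing there to be an equality. In the planar Robin setting, that proof rests on three ingredients applied at almost every level $t$ in the range of $u$: (i) the weak formulation of~\eqref{rob1} tested against $(u-t)_+$, producing the identity
\[
\int_{\{u=t\}}|\nabla u|\,d\mathcal{H}^1+\beta\int_{\partial\Omega\cap\{u>t\}}u\,d\mathcal{H}^1 \;=\; \mu(t),
\]
with $\mu(t):=|\{u>t\}|$; (ii) the Cauchy--Schwarz inequality applied to the coarea identity $-\mu'(t)=\int_{\{u=t\}}|\nabla u|^{-1}d\mathcal{H}^1$; and (iii) the planar isoperimetric inequality applied to $\Omega_t:=\{u>t\}\subset\R^2$. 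Integrating and comparing with the analogous identities on the disk produces~\eqref{punt}, with equality realised when the domain is a disk and the solution is radial. Under the hypothesis $u^\sharp\equiv v$, each of (ii) and (iii) must therefore hold with equality for a.e.\ $t\in(u_m,\max_{\overline\Omega}u)$, where $u_m:=\min_{\partial\Omega}u>0$ by the Hopf lemma applied to the Robin problem.

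Equality in~(iii) forces $\Omega_t$ to be a disk $B_{r(t)}(x(t))\subset\R^2$, and equality in~(ii) forces $|\nabla u|$ to be constant on $\{u=t\}\cap\Omega$, both for a.e.\ such $t$. Since $u$ is continuous on $\overline\Omega$ and $u_m$ is attained on $\partial\Omega$, the nested family $\{\Omega_t\}_{t>u_m}$ satisfies $\bigcup_{t>u_m}\Omega_t=\Omega$. Choosing a sequence $t_n\downarrow u_m$ with $\Omega_{t_n}=B_{r_n}(x_n)$ and using $|\Omega_{t_n}|\to|\Omega|$ together with a routine compactness argument on $(r_n,x_n)$, I conclude that $\Omega$ itself is a disk $B_R(x_0)$ with $R=\sqrt{|\Omega|/\pi}$; that is, $\Omega=\Omega^\sharp+x_0$.

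Once $\Omega=\Omega^\sharp+x_0$, the rotational invariance of problem~\eqref{rob1} around $x_0$ combined with the uniqueness of its weak solution recalled after~\eqref{weak-f} yields that $u(\cdot+x_0)$ is radial; a direct radial ODE computation (or the fact that $u$ is positive and super-harmonic with negative normal derivative on $\partial \Omega$) shows that it is radially decreasing, so it coincides with its Schwartz rearrangement. Hence $u(\cdot+x_0)=u^\sharp(\cdot)$, completing the statement.

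The main obstacle I anticipate is the bookkeeping between steps (ii) and (iii) when $\Omega_t$ touches $\partial\Omega$: the perimeter $P(\Omega_t)$ splits into the interior piece $\mathcal{H}^1(\{u=t\}\cap\Omega)$ appearing in~(ii) and the boundary piece $\mathcal{H}^1(\partial\Omega\cap\{u>t\})$ weighted by $\beta$ in~(i), and one must make sure that equality in the planar isoperimetric inequality truly selects the disk geometry rather than being absorbed by a balanced redistribution between these two pieces. A secondary, milder issue is the limit $t\downarrow u_m$, which upgrades the disk shape of almost every level set into a disk shape of $\Omega$ itself and which should follow from the Lipschitz regularity of $\partial\Omega$.
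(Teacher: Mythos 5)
Your argument through the domain identification $\Omega=\Omega^\sharp+x_0$ mirrors the paper's Step 1: forcing equality a.e.\ in the isoperimetric and Cauchy--Schwarz inequalities makes almost every level set a disk, and a nested-union/compactness limit as $t\downarrow u_m$ upgrades this to $\Omega$ itself being a disk. You are, however, terse on \emph{how} $u^\sharp\equiv v$ actually forces these equalities; the paper first derives $u_m=v_m$, integrates \eqref{talentimu}, and closes the chain \eqref{chain} using the explicit value $v_m=|\Omega|^{1/2}/(2\sqrt{\pi}\beta)$ together with $\int_{\partial\Omega}u\,d\mathcal{H}^1=|\Omega|/\beta$, and this bookkeeping is precisely where the Robin boundary term is absorbed, so it deserves to be spelled out rather than asserted. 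The genuinely different part is your final step. The paper proves radiality of $u$ through a second, nontrivial step: it shows that the balls $B(x(t),\rho(t))=\{u>t\}$ are concentric via a Brothers--Ziemer--type contradiction argument that exploits the constancy of the function $g$ from \eqref{fung} on $\partial U_t$. You instead observe that once $\Omega$ is a disk, the Robin problem \eqref{rob1} is invariant under rotations about its center, so uniqueness of the weak solution forces $u(\cdot+x_0)$ to be radial, and superharmonicity together with $\partial u/\partial\nu=-\beta u<0$ on $\partial\Omega$ makes it radially decreasing, hence equal to $u^\sharp$. This uniqueness-plus-invariance shortcut is correct (indeed the paper invokes essentially the same argument at the very end of the proof of Theorem \ref{teo2}), and it bypasses the concentricity step entirely. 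The paper's Step 2 retains independent value as a technique that does not lean on uniqueness and that directly extracts geometric information about the level sets (the sort of input one would want for a quantitative stability version), but for the rigidity statement as written your shorter route is valid.
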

Moreover, we have the following extension of Theorem \ref{Teo1}.
\begin{teorema}\label{teo2}
Let $\Om\subset\R^2$ be an open, bounded and Lipschitz set and let $\Om^\sharp$ be the ball centered at the origin and having the same measure as $\Om$. Let $u$ be the solution to \eqref{rob1} and let $v$ be the solution to \eqref{rob2}. We denote by  $R$ the radius of $\Omega^\sharp$.\\
If  $\displaystyle{\min_\Omega u=\min_{\Omega^\sharp} v}$ and if there exists $r\in ]0, R[$ such that $u^\sharp(x)=v(x)$ for $\abs{x}=r$, then $$\Om=\Om^\sharp+ x_0, \quad u(\cdot+x_0)=u^\sharp(\cdot) \, \,  \text{in \;} \Omega.$$
\end{teorema}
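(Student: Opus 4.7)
The plan is to reduce the hypotheses to coincidence of the one-dimensional rearrangements of $u$ and $v$ at two specific values of the measure parameter, to propagate this into equality on a whole interval by analyzing the integro-differential inequality driving the Talenti-type comparison of \cite{ANT}, and then to deduce via the associated geometric rigidity that $\Om$ must be a disk. Concretely, I set $U(s)=u^*(s)$ and $V(s)=v^*(s)$ for $s\in[0,|\Om|]$; since $u^\sharp$ and $v$ are radially symmetric and non-increasing, the comparison \eqref{punt} is equivalent to $U(s)\le V(s)$ on $[0,|\Om|]$. The assumption $u^\sharp(x)=v(x)$ on $\{|x|=r\}$ rewrites as $U(s_0)=V(s_0)$ with $s_0=\pi r^2\in(0,|\Om|)$, while the equality of minima rewrites as $U(|\Om|)=V(|\Om|)$: coincidence at an interior point and at the right endpoint of the measure interval.

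The core task is then to upgrade these two pointwise equalities to $U\equiv V$ on the whole interval $[s_0,|\Om|]$. The derivation of \eqref{punt} in \cite{ANT} rests on an integro-differential inequality for $U$ obtained by combining the coarea formula, the two-dimensional isoperimetric inequality applied to $\{u>t\}$, a Cauchy--Schwarz estimate on the level lines, and the Robin boundary condition; the analogous relation holds as an equality for $V$. Subtracting, one obtains a differential inequality for the non-negative quantity $V-U$ that vanishes at both $s_0$ and $|\Om|$. A Gronwall-type comparison, in the spirit of the argument used to prove Theorem \ref{Teo1}, should then force $V\equiv U$ on $[s_0,|\Om|]$, and hence equality in each of the ingredients above for every level $t\in[\min u,\,u^*(s_0)]$.

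In particular, equality in the isoperimetric inequality at the endpoint $s=|\Om|$, which corresponds to the level $t=\min u$ and to the set $\{u>\min u\}=\Om$ (the minimum being attained on $\partial\Om$ by the strong minimum principle applied to $-\Delta u=1$), yields that $\Om$ itself is a disk $B(x_0,R)$. Once $\Om$ is known to be a disk, the uniqueness of the positive weak solution to \eqref{rob1} together with the rotational invariance of the equation force $u(\cdot+x_0)$ to coincide with the radial solution $v$ on $\Om^\sharp$, so $u(\cdot+x_0)=v=u^\sharp(\cdot)$ in $\Om$, as claimed.

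The main obstacle I expect is the propagation step: the inequality in \cite{ANT} is not a standard ODE, since it carries a nonlocal boundary contribution of the form $\beta\int_{\partial\Om\cap\{u>t\}}u\,d\mathcal{H}^1$, so the Gronwall-type comparison between $U$ and $V$ on $[s_0,|\Om|]$ requires careful tracking of this boundary term and of its interaction with the isoperimetric estimate at the endpoint $s=|\Om|$. Once this step is settled, the geometric rigidity and the reduction to the Robin torsion problem on a disk are comparatively routine.
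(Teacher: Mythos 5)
Your outline of the endpoints and the endgame is sound: rewriting the hypotheses as $u^*(s_0)=v^*(s_0)$ for $s_0=\pi r^2$ and $u^*(|\Omega|)=v^*(|\Omega|)$, and, once $\Omega$ is identified as a disk, concluding via uniqueness and rotational invariance that $u$ is radial --- this is exactly how the paper closes. The problem is the middle of the argument, which you yourself flag as the ``main obstacle'' and which your proposal does not actually resolve. Invoking a ``Gronwall-type comparison'' on $V-U$ is not the right tool here, and it is not what makes the proof work: the inequality \eqref{ust} contains the nonlocal term $\int_0^{u^*(s)}dr\int_{\partial U_r^{\text{ext}}}u^{-1}\,d\mathcal{H}^1$, which is monotone in $u^*(s)$ but does not place $V-U$ into a linear or even closed differential inequality, and subtracting \eqref{vst} from \eqref{ust} leaves a difference with no usable sign structure for a Gronwall iteration.

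The step the paper actually performs is different, and you should note it. First, reproducing the chain of inequalities from Theorem~\ref{Teo1} at the single value $s_0$ yields equality in \eqref{talentimu} at the one level $\overline{t}=u^*(s_0)$, and in particular $\mu(\overline{t})=\phi(\overline{t})$. Then, rather than any Gronwall device, the paper multiplies \eqref{talentimu} and \eqref{talentiphi} by the weight $t$ and integrates over $[0,\overline{t}]$. The boundary contributions of $u$ and $v$ both collapse to $|\Omega|/(2\beta^2)$ (this is where $u_m=v_m$ is used a second time, through the identity $\int_{\partial\Omega}u\,d\mathcal{H}^1=|\Omega|/\beta$), which gives $\int_0^{\overline{t}}t(-\mu'(t))\,dt\ge\int_0^{\overline{t}}t(-\phi'(t))\,dt$. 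After integration by parts, using $\mu(\overline{t})=\phi(\overline{t})$, this becomes $\int_0^{\overline{t}}(\mu-\phi)\,dt\ge 0$, which combined with the already-known pointwise inequality $\mu\le\phi$ from \eqref{mu_phi} forces $\mu\equiv\phi$ on $[0,\overline{t}]$. That is the key propagation, and it is a weighted-integral rigidity argument (exploiting the existing one-sided pointwise comparison), not an ODE comparison. Without this step your proof has a genuine gap: you have not shown that $U\equiv V$ on $[s_0,|\Omega|]$, and hence not shown that equality in the isoperimetric inequality propagates to the level $t=u_m$, which is what your final geometric conclusion relies on.
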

The idea of the proof is the following. 
Starting from the proof in \cite{ANT} of  the pointwise comparison \eqref{punt}, we show that the equality  $u^\sharp=v$ implies that the level sets of $u$ are balls on the boundary of which the normal derivative of $u$ is constant. Then, we prove that these balls are concentric, using  an  argument inspired by \cite{Brothers1988} (see also \cite[Lemma 6]{serra}). In the Robin case,  the main difficulty is that, contrary to the Dirichlet case, the level sets of the solution may touch the boundary of $\Omega$.

As far as the  Dirichlet boundary conditions,  the starting point for the study of these kinds of problems is the paper by Talenti \cite{T}, in which a pointwise comparison is stated between the solution to the following problems:
\begin{equation*}
    \begin{cases}
    -\Delta u_D=f \, &\text{in } \Om, \\
     u_D=0 & \text{on } \partial\Omega,
    \end{cases} \quad \qquad
    \begin{cases}
    -\Delta v_D=f^\sharp \, & \text{in } \Om^\sharp, \\
    v_D=0 & \text{on } \partial\Omega^\sharp,
    \end{cases}
\end{equation*}
 whenever
 $f\in  L^{\frac{2n}{n+2}}(\Omega)$. 
 In particular, he proves in \cite{T} the pointwise inequality:
 \begin{equation}\label{point}
       u_D^\sharp(x)\le v_D(x) \quad \forall x \in \Omega^\sharp
 \end{equation}
 and, consequently, by integration,  the Saint-Venant inequality in the Dirichlet case holds:
\begin{equation*}
    \int_{\Omega} u_D \;dx=\int_{\Omega^\sharp} u^\sharp_D\;dx\leq \int_{\Omega^\sharp} v_D\;dx,
\end{equation*}
  conjectured by Saint-Venant in 1856.
 Moreover, a previous result in this direction is due to Weinberger, that proved in  \cite{W} the following result:
 $$\max_\Omega u_D\le \max_{\Om^\sharp} v_D.$$

We stress that, in the case of Dirichlet boundary conditions, the rigidity result holds and it is proved in \cite{lions_remark} (see Remark \ref{diff} for the main differences to the Robin case).  

Finally, we conclude by a list of generalization of Talenti's comparison results  in  different setting with Dirichlet boundary conditions.  Extension to the semilinear and nonlinear
elliptic case can be found, for instance, in \cite{T2}, to the  anisotropic elliptic operators in \cite{AFLT},  to  the  parabolic case in \cite{ALT2} and to  
higher order operators in \cite{AB,T3}.
We also refer the reader to \cite{Kaw, kes} and the references therein for  a survey on Talenti's techniques.

 The paper is organized as follows. In Section \ref{sec2} we recall some basic notions about rearrangements of functions and we recall some properties of the Torsion function, while  Section \ref{sec3} is dedicated to the proof of Theorem \ref{Teo1} and Theorem \ref{teo2} and to a list of open problems. 
 
\section{Notation and preliminaries}\label{sec2}
 Throughout this article, $|\cdot|$ will denote the Euclidean norm in $\mathbb{R}^2$,
 while $\cdot$ is the standard Euclidean scalar product. By $\mathcal{H}^1(\cdot)$, we denote the $1-$dimensional Hausdorff measure in $\mathbb{R}^2$. 
 The perimeter of $\Omega$ will be denoted by $P(\Omega)$ and since $\Omega$ is a bounded, open and Lipschitz set, we have that $P(\Omega)=\mathcal{H}^{1}(\partial\Omega)$. Moreover, we denote by $|\Omega|$ the Lebesgue measure of $\Omega$. 
 
 If $\Omega$ is an open and Lipschitz set, it holds the following coarea formula. Some references for results relative to the sets of finite perimeter and the coarea formula are, for instance, \cite{maggi2012sets,ambrosio2000functions}.

 \begin{teorema}[Coarea formula]
 Let $f:\Omega\to\R$ be a Lipschitz function and let $u:\Omega\to\R$ be a measurable function. Then,
 \begin{equation}
   \label{coarea}
   {\displaystyle \int _{\Omega}u|\nabla f(x)|dx=\int _{\mathbb {R} }dt\int_{(\Omega\cap f^{-1}(t))}u(y)\, d\mathcal {H}^{1}(y)}.
 \end{equation}
 \end{teorema}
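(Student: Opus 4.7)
The plan is to reduce the general coarea identity to the base case $u \equiv 1$ and then to bootstrap by standard measure-theoretic techniques. For the base case one wants to establish
\begin{equation*}
\int_\Omega |\nabla f(x)| \, dx = \int_\R \mathcal{H}^1(\Omega \cap f^{-1}(t)) \, dt,
\end{equation*}
which is precisely the \emph{BV coarea formula} applied to the Lipschitz (hence BV) function $f$. A direct route is to approximate $f$ by smooth functions $f_\varepsilon \in C^\infty(\R^2)$ (extending $f$ globally via Kirszbraun and mollifying), apply Sard's theorem to obtain the identity for each smooth $f_\varepsilon$, and pass to the limit. On the left-hand side one uses that $|\nabla f_\varepsilon| \to |\nabla f|$ in $L^1(\Omega)$; on the right-hand side one uses the lower semicontinuity of the perimeter of sublevel sets $\{f > t\}$ together with a matching upper estimate (obtained from Eilenberg's inequality) to upgrade inequality to equality.

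With the base case established, the extension to general measurable $u$ follows the usual hierarchy: first $u = \chi_E$ for measurable $E \subset \Omega$ (approximated by open sets via outer regularity of Lebesgue measure), then by linearity to nonnegative simple functions, then by monotone convergence to nonnegative measurable $u$, and finally to signed $u$ via the decomposition $u = u^+ - u^-$. Equivalently, one may invoke Cavalieri's principle to write $u = \int_0^\infty \chi_{\{u > s\}} \, ds$, apply the base case to each superlevel set, and exchange the order of integration by Fubini.

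The main technical obstacle is controlling the critical set $\{\nabla f = 0\}$ in the base case under mere Lipschitz regularity. On this set the left-hand side of the identity contributes nothing, and one needs that $f$ maps its critical set into a Lebesgue-null subset of $\R$, so that spurious contributions from "degenerate" level sets do not appear on the right. This is a Lipschitz version of Sard's theorem and is the key non-elementary ingredient. Since it is a classical result in geometric measure theory, I would invoke it directly from \cite{maggi2012sets, ambrosio2000functions} rather than reconstruct the proof; in fact, given that the statement is only used as a preliminary tool in the rest of the paper, a reference to these sources is the appropriate level of detail.
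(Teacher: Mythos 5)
The paper itself gives no proof of this theorem: it states the coarea formula as a known preliminary and simply points to the references \cite{maggi2012sets,ambrosio2000functions}, exactly as you conclude at the end of your proposal. Your sketch of how the proof would actually go is sound and is the standard route: establish the case $u\equiv 1$ via smooth approximation of the Lipschitz function together with lower semicontinuity and Eilenberg's inequality, then extend to characteristic functions, simple functions, nonnegative measurable functions by monotone convergence, and finally signed functions (or, equivalently, Cavalieri plus Fubini). One small imprecision: the fact that the critical set $\{\nabla f=0\}$ contributes nothing to the right-hand side is better viewed as an intermediate lemma proved by a covering argument (as in Evans--Gariepy or Ambrosio--Fusco--Pallara) rather than as a ``Lipschitz Sard theorem''; Sard's theorem in the classical sense fails for merely Lipschitz maps, and the coarea formula is in fact one way to \emph{deduce} that $\mathcal{H}^{n-1}(f^{-1}(t)\cap\{\nabla f=0\})=0$ for a.e.\ $t$. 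Since the paper treats the statement as a cited background tool and you reach the same conclusion, no further detail is called for.
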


We recall now some basic definitions and results about rearrangements and we refer to \cite{kes} for a general overview. 
 \begin{definizione}
	Let $u: \Omega \to \R$ be a measurable function, the \emph{distribution function} of $u$ is the function $\mu : [0,+\infty[\, \to [0, +\infty[$ defined by
	$$
	\mu(t)= \abs{\Set{x \in \Omega \, :\,  \abs{u(x)} > t}}.
	$$
\end{definizione}
\begin{definizione} 
	Let $u: \Omega \to \R$ be a measurable function, the \emph{decreasing rearrangement} of $u$, denoted by $u^\ast$, is the distribution function of $\mu $. 
	\end{definizione}
	\begin{oss}\label{inverse}
	We observe that the function $\mu(\cdot)$ is decreasing and right continuous and the function $u^\ast(\cdot)$ is the generalized inverse of the function $\mu(\cdot)$.
	\end{oss}

	\begin{definizione}\label{rear}
	 The \emph{Schwartz rearrangement} of $u$ is the function $u^\sharp $ whose level sets are balls with the same measure as the level sets of $u$. 
	\end{definizione}
		We have the following relation between $u^\sharp$ and $u^*$:
	$$u^\sharp (x)= u^*(\pi\abs{x}^2)$$
 and it can be easily checked that the functions $u$, $u^*$ e $u^\sharp$ are equi-distributed, so we have that
$$ \displaystyle{\norma{u}_{L^p(\Omega)}=\norma{u^*}_{L^p(0, \abs{\Omega})}=\lVert{u^\sharp}\rVert_{L^p(\Omega^\sharp)}}.$$
Let now $u$ be the solution to \eqref{rob1}. For $t\geq 0$, we introduce the following notations:
$$U_t=\left\lbrace x\in \Omega : u(x)>t\right\rbrace \quad \partial U_t^{int}=\partial U_t \cap \Omega, \quad \partial U_t^{ext}=\partial U_t \cap \partial\Omega, \quad \mu(t)=\abs{U_t}$$
and, if $v$ is the solution to \eqref{rob2}, using the same notations as above,  we set
$$V_t=\left\lbrace x\in \Omega^\sharp : v(x)> t\right\rbrace, \quad \partial V_t^{int}=\partial V_t \cap \Omega, \quad \partial V_t^{ext}=\partial V_t \cap \partial\Omega, \quad \phi(t)=\abs{V_t}.$$
Because of the invariance of the Laplacian under rotation, we have that $v$ is radial.
Moreover, we observe that the solutions $u$ to \eqref{rob1} and $v$ to \eqref{rob2} are both superharmonic and so, by the strong maximum principle, it follows that they achieve their minima on the boundary.

From now on, we denote by 
\begin{equation}
    u_m = \min_\Omega u, \quad \quad v_m=\min_{\Omega^\sharp} v,
\end{equation}
\begin{equation}
    u_M = \max_\Omega u, \quad \quad v_M=\max_{\Omega^\sharp} v.
\end{equation}
Since we are assuming that the Robin boundary parameter $\beta$ is strictly positive, 
 we have that $u_m > 0$ and $v_m > 0$. Hence, $u$ and $v$ are strictly positive in the interior of $\Omega$.
 
 Since $v$ is radial, positive and decreasing along the radius then, for $0\le t\le v_m$, $$V_t=\Omega^\sharp,$$ while, for $v_m<t<v_M$, we have that $V_t$ is a ball concentric to $\Omega^\sharp$ and strictly contained in it.
 
 In the next remarks, we collect some general and useful results. 
 \begin{oss}
 By the weak formulation \eqref{weak-f} and the isoperimetric inequality, we have  that 
	\begin{equation*}
		\begin{split}
			 v_m  \text{P}(\Omega^\sharp) &= \int_{\partial \Omega^\sharp} v(x) \, d\mathcal{H}^1= \frac{1}{\beta}\int_{\Omega^\sharp}  \, dx=\frac{1}{\beta} \int_{\Omega} \, dx \\
			& = \int_{\partial \Omega} u(x) \, d\mathcal{H}^1 
		\geq u_m  \text{P}(\Omega)  \geq  u_m \text{P}(\Omega^\sharp),
		\end{split}
	\end{equation*}
	and, as a consequence,
	\begin{equation}
		\label{minima_eq}
		u_m \leq  v_m.
	\end{equation}
Moreover, from \eqref{minima_eq}  follows that 
	\begin{equation}
		\label{mf}
		\mu (t) \leq \phi (t) = \abs{\Omega} \quad \forall t \leq v_m.
	\end{equation} 
	 \end{oss}

\begin{oss}\label{ziemer}
We observe that $\phi$, the distribution function of $v$, is absolutely continuous. Indeed, in \cite[Lemma 2.3]{Brothers1988}, is proved that the absolutely continuity of $\phi$ is equivalent to the following condition: 
\begin{equation}\label{brother}
    \abs{\Set{\nabla v=0}\cap v^{-1}(v_m,v_M)}=0 \end{equation}
which is verified by $v$, as its gradient never vanishes on the level sets $V_t$.
\end{oss}

The starting point of the proof of our main results is the following Lemma, proved in \cite{ANT}. For the convenience of exposition, we report here the proof.
\begin{lemma} \label{key}
Let $u$ be a solution to \eqref{rob1} and let $v$ be a solution to \eqref{rob2}. Then, for almost every $t >0$, we have
	\begin{equation} 
	\label{talentimu}
	4\pi  \leq  \left( - \mu'(t) + \frac{1}{\beta }\int_{\partial U_t^{ext}} \frac{1}{u} \, d\mathcal{H}^1\right)
	\end{equation}
	and
\begin{equation} 
	\label{talentiphi}
	4\pi  =  \left( - \phi'(t) + \frac{1}{\beta }\int_{\partial V_t^{ext}} \frac{1}{v} \, d\mathcal{H}^1\right).
	\end{equation}
		
\end{lemma}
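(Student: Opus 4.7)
The plan is to integrate the equation $-\Delta u=1$ on the super-level set $U_t$ and combine the resulting flux identity with Cauchy–Schwarz and the planar isoperimetric inequality, in the spirit of Talenti but with Robin corrections coming from $\partial U_t^{ext}$. First I would apply the divergence theorem on $U_t$, splitting $\partial U_t=\partial U_t^{int}\cup\partial U_t^{ext}$: the outer unit normal to $U_t$ on $\partial U_t^{int}$ is $-\nabla u/|\nabla u|$, so the flux contribution there is $-|\nabla u|$, while on $\partial U_t^{ext}\subset\partial\Omega$ the Robin condition gives $\partial u/\partial\nu=-\beta u$. This produces the basic identity
$$\mu(t)=\int_{\partial U_t^{int}} |\nabla u|\,d\mathcal{H}^1+\beta\int_{\partial U_t^{ext}} u\,d\mathcal{H}^1.$$

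Next I would write the perimeter as
$$P(U_t)=\int_{\partial U_t^{int}} |\nabla u|^{1/2}\cdot\frac{1}{|\nabla u|^{1/2}}\,d\mathcal{H}^1+\int_{\partial U_t^{ext}} (\beta u)^{1/2}\cdot\frac{1}{(\beta u)^{1/2}}\,d\mathcal{H}^1$$
and apply Cauchy–Schwarz on the concatenated measure, which together with the identity above yields
$$P(U_t)^2\le \mu(t)\left(\int_{\partial U_t^{int}} \frac{d\mathcal{H}^1}{|\nabla u|}+\frac{1}{\beta}\int_{\partial U_t^{ext}} \frac{d\mathcal{H}^1}{u}\right).$$
By the coarea formula \eqref{coarea} and the standard Fleming–Rishel identity one has, for a.e.\ $t$, that $-\mu'(t)=\int_{\partial U_t^{int}} |\nabla u|^{-1}\,d\mathcal{H}^1$. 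Finally, the two-dimensional isoperimetric inequality $P(U_t)^2\ge 4\pi\mu(t)$ and division by $\mu(t)>0$ give \eqref{talentimu}.

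For \eqref{talentiphi} the same chain of steps is applied to $v$, and every inequality becomes an equality by radial symmetry. When $t\le v_m$ one has $V_t=\Omega^\sharp$, so $\partial V_t^{int}=\emptyset$, $-\phi'(t)=0$, and $v$ is constant on $\partial V_t^{ext}=\partial\Omega^\sharp$; a direct computation with the explicit radial profile $v(r)=\tfrac14(R^2-r^2)+\tfrac{R}{2\beta}$ then yields $\beta^{-1}\int_{\partial\Omega^\sharp} v^{-1}\,d\mathcal{H}^1=4\pi$. When $v_m<t<v_M$, $V_t$ is a ball concentric with $\Omega^\sharp$, so $\partial V_t^{ext}=\emptyset$, the isoperimetric inequality saturates, and $|\nabla v|$ is constant on $\partial V_t^{int}$, giving equality in Cauchy–Schwarz.

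The main technical obstacle is justifying the identification $-\mu'(t)=\int_{\partial U_t^{int}} |\nabla u|^{-1}\,d\mathcal{H}^1$ for almost every $t$: this requires controlling the critical set $\{\nabla u=0\}$ on level sets (handled by elliptic regularity, $u\in H^2_{\mathrm{loc}}$, together with a Sard-type argument in the spirit of Remark \ref{ziemer}), and it is precisely what forces the qualifier \emph{for almost every $t$} in \eqref{talentimu}.
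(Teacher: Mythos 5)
Your proof is correct and arrives at the same essential chain of estimates (flux identity on $U_t$, Cauchy--Schwarz on the two-piece decomposition of $\partial U_t$, planar isoperimetric inequality, and saturation for the radial solution $v$). The one genuine difference is the derivation of the flux identity
$$\mu(t)=\int_{\partial U_t^{int}} |\nabla u|\,d\mathcal{H}^1+\beta\int_{\partial U_t^{ext}} u\,d\mathcal{H}^1.$$
You obtain it by applying the divergence theorem directly on the super-level set $U_t$, which requires you to treat $u$ as a pointwise (classical) solution, to know that $-\nabla u/|\nabla u|$ is the outer normal of $U_t$ on $\partial U_t^{int}$, and tacitly to assume enough regularity of $u$ and of the reduced boundary of $U_t$ to make Gauss--Green legitimate (interior elliptic regularity helps, but regularity up to the Lipschitz boundary $\partial\Omega$ and non-degeneracy of $\nabla u$ on level sets are not automatic). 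The paper instead obtains the identical identity purely at the level of the weak formulation \eqref{weak-f}: one inserts the truncated test function $\varphi=\min\{(u-t)^+,h\}$, divides by $h$, uses the coarea formula \eqref{coarea}, and lets $h\to0^+$. This route works directly for $H^1$ weak solutions and avoids any appeal to the divergence theorem on $\partial U_t$, at the cost of a slightly longer computation. The two proofs then coincide.

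One small point worth noting: to pass from $4\pi\le \int_{\partial U_t^{int}}|\nabla u|^{-1}\,d\mathcal{H}^1+\tfrac1\beta\int_{\partial U_t^{ext}}u^{-1}\,d\mathcal{H}^1$ to \eqref{talentimu} you only need the one-sided inequality $\int_{\partial U_t^{int}}|\nabla u|^{-1}\,d\mathcal{H}^1\le -\mu'(t)$, which holds for a.e.\ $t$ by coarea and monotone differentiability of $\mu$ without any Sard-type input; the equality (which you invoke and which does require the non-degeneracy of $\nabla u$ on level sets in the spirit of Remark \ref{ziemer}) is not needed for \eqref{talentimu}, only for the equality case \eqref{talentiphi}, where it is immediate from the explicit radial profile of $v$.
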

\begin{proof}
 Let $t >0$ and  $h >0$. Let us choose the following test function in the weak  formulation \eqref{weak-f}  
	\begin{equation*}
	\left.
	\varphi (x)= 
	\right.
	\begin{cases}
	0 & \text{ if }u < t \\
	u-t & \text{ if }t< u < t+h \\
	h & \text{ if }u > t+h.
	\end{cases} 
	\end{equation*}
	Then, we have
	\begin{equation}\label{weaky}
	\begin{split}
	\int_{U_t \setminus U_{t+h}} \abs{\nabla u}^2\, dx &+ \beta h \int_{\partial U_{t+h}^{ ext }} u  \, d\mathcal{H}^1 + \beta \int_{\partial U_{t}^{ ext }\setminus \partial U_{t+h}^{ ext }} u (u-t) \, d\mathcal{H}^1 \\
	& =  \int_{U_t \setminus U_{t+h}}  (u-t ) \, dx + h \int_{U_{t+h}}  \, dx.
	\end{split}
	\end{equation}
	Dividing \eqref{weaky} by $h$, using coarea formula \eqref{coarea} and letting $h$ go to $0$, we have that for a.e. $t>0$
	\begin{equation*}
	\int_{\partial U_t} g(x) \, d\mathcal{H}^1 = \int_{U_{t}} \, dx ,
	\end{equation*}
	where
	\begin{equation}\label{fung}
	    \left.
	g(x)= 
	\right.
	\begin{cases}
	\abs{\nabla u } & \text{ if }x \in \partial U_t^{ int },\\
	\beta u & \text{ if }x \in \partial U_t^{ ext }.
	\end{cases}
	\end{equation}
Using the isoperimetric inequality, for a.e. $t\in [0, u_M )$ we have 

	\begin{align}
	\label{isop}
	    2\sqrt{\pi}  \mu(t)^{\frac{1}{2}} &\leq P(U_t) = \int_{\partial U_t} \,  d\mathcal{H}^1\leq\\
	    \label{holder}
	    &\leq \left(\int_{\partial U_t}g\, d\mathcal{H}^1\right)^{\frac{1}{2}} \left(\int_{\partial U_t}\frac{1}{g} \, d\mathcal{H}^1\right)^{\frac{1}{2}} \\
	&= \mu(t)^{\frac{1}{2}} \left( \int_{\partial U_t^{ int }}\frac{1}{\abs{\nabla u}}\, d\mathcal{H}^1 +\frac{1}{\beta} \int_{\partial U_t^{ ext }}\frac{1}{u} \,  d\mathcal{H}^1 \right)^{\frac{1}{2}}.
	\end{align}
	and, so,  \eqref{talentimu} follows. Finally, we notice that, if $v$ is the solution to \eqref{rob2}, then all the inequalities above are equalities, and, consequently, we have \eqref{talentiphi}.
\end{proof}

\begin{oss}
By integrating \eqref{talentiphi}, it is possible to write the explicit expression of $v$, that is 
$$v(x)= \frac{\abs{\Om}-\pi\abs{x}^2}{4\pi}+ \frac{\abs{\Om}^{\frac{1}{2}}}{2\sqrt{\pi}\beta}.$$
\end{oss}

\begin{oss} Integrating \eqref{talentimu} and \eqref{talentiphi} between $0$ and $t$ and integrating by parts, 
it is proved in  \cite{ANT}  that
\begin{equation}\label{mu_phi}
  \mu(t)\leq \phi(t), \quad t\geq v_m.
\end{equation}
Finally, we observe that 
the pointwise comparison \eqref{punt} easily follows from \eqref{mu_phi}.
\end{oss}

\section{Proof of the main results}\label{sec3}
\begin{proof}[Proof of Theorem \ref{Teo1}]
First of all, let us observe that, from the fact that we are assuming that $u^\sharp=v$, we have 
\begin{equation}\label{min}
     u_m=v_m.
\end{equation}
We integrate now \eqref{talentimu} and \eqref{talentiphi} from $0$ to $t$ and, since $u^\ast$ is the generalized inverse of $\mu$ (Remark \ref{inverse}), we perform the following change of variables $\mu(t)=s$ and $\phi(t)=s$. So, we get
\begin{equation}
    \label{vst}
    v^\ast(s)=\frac{\abs{\Om}-s}{4\pi}+ \frac{\abs{\Om}^{\frac{1}{2}}}{2\sqrt{\pi}\beta}
\end{equation}
\begin{equation}
    \label{ust}
    u^\ast(s)\le \frac{\abs{\Om}-s}{4\pi}+ \frac{1}{4\pi \beta}\int_0^{u^\ast(s)}dr \int_{\partial U_r^{ext}}\frac{1}{u}\, d\mathcal{H}^1.
\end{equation}
From $u^\sharp=v$, we have $u^\ast=v^\ast$ and, so, combining \eqref{vst} and \eqref{ust}, we get
\begin{equation}\label{chain}
    \begin{aligned}
    \frac{\abs{\Om}^{\frac{1}{2}}}{2\sqrt{\pi}\beta}&\le \frac{1}{4\pi \beta}\int_0^{u^\ast(s)}dr \int_{\partial U_r^{ext}}\frac{1}{u}\, d\mathcal{H}^1\\
    &\le \frac{1}{4\pi \beta u_m} \int_{0}^{u_M}\int_{\partial U_r^{ext}} \, d\mathcal{H}^1=  \frac{1}{4\pi \beta u_m} \frac{\abs{\Om}}{\beta}=\frac{\abs{\Om}^{\frac{1}{2}}}{2\sqrt{\pi}\beta},
\end{aligned}
\end{equation}

where the last equality follows from \eqref{min}.
 Therefore, all the inequalities in \eqref{chain} are equalities and, consequently, equality holds in \eqref{talentimu}.

We now divide the proof in two steps. 

\textbf{Step 1}. Let us prove that every level set $\Set{u>t}$ is a ball.

Equality in \eqref{talentimu} implies the equality in \eqref{isop},  i.e.
$$  2\sqrt{\pi}  \mu(t)^{\frac{1}{2}} = P(U_t)$$
that means that almost every level set is a ball. On the other hand, for all $t\in [u_m, u_M)$, there exists a sequence $\Set{t_k}$ such that
\begin{enumerate}
    \item $t_k\to t$;
    \item $t_k>t_{k+1}$;
    \item $\{u>t_k\}$ is a ball for all $k$. 
\end{enumerate}
Since $\Set{u>t}=\cup_k\Set{u>t_k}$ can be written as an increasing union of balls, then we have that $\{u>t\}$ is a ball for all $t$ and, from the fact that  $\Om=\{u>u_m\}$, we obtain that $\Om=x_0+\Om^\sharp$. From now on, we can assume without loss of generality that $x_0=0$.

\textbf{Step 2.} Let us prove that the level sets are concentric balls. 

Equality in \eqref{talentimu} implies also equality in \eqref{holder}, i.e. 
$$\int_{\partial U_t} \,  d\mathcal{H}^1=\left(\int_{\partial U_t}g\, d\mathcal{H}^1\right)^{\frac{1}{2}} \left(\int_{\partial U_t}\frac{1}{g} \, d\mathcal{H}^1\right)^{\frac{1}{2}}.$$
 This means that, as we have equality in the H\"older inequality,  for almost every $t$, the function
$$
	\left.
	g(x)= 
	\right.
	\begin{cases}
	\abs{\nabla u } & \text{ if }x \in \partial U_t^{ int },\\
	\beta u & \text{ if }x \in \partial U_t^{ ext }
	.
	\end{cases}
	$$
is constant, in particular	
\begin{equation}
   \label{conf1} 
\abs{\nabla u}= C_t, \quad\forall x\in \partial U_t^{int}, \qquad \beta u=C_t, \quad \forall x\in \partial U_t^{ext},
\end{equation}
and by continuity we can infer that this is true for all $t$.
By the way, we observe that for all $x\in \partial U_t$,
\begin{equation}\label{normal}
    g(x)=\displaystyle{\frac{\partial u(x)}{\partial \nu_t}},
\end{equation}
where $\nu_t$ is the unit outer normal to $\partial U_t$.

     From equality \eqref{talentimu}, we have also that
   $$\mu(t)=\phi(t),$$ and, consequently, we can deduce from Remark \ref{ziemer} that also $\mu$ is absolutely continuous. If we denote by 
    $$B(x(t),\rho(t))=\Set{u>t},$$
    we can observe that the function $\mu(t)$ is locally Lipschitz in $(u_m, u_M)$, and, so, the function $$\rho(t)=\left(\frac{\mu(t)}{\pi}\right)^{\frac{1}{2}}$$ is also locally Lipschitz. Moreover, since $\Set{u>t}\subseteq \Set{u>s}$ for $t>s$, we have 
    $$\abs{x(t)-x(s)}\le \rho(s)-\rho(t)$$
    and, consequently, $x(t)$ is locally Lipschitz.
    
  Let us assume now by contradiction that $x(t)$ is not constant. This means that there exists $t_0\in(u_m,u_M)$ such that
    $$y=\frac{d}{dt}x(t_0)\neq 0.$$
    Let us set $z:=y/\abs{y}$ and
    $$P(t):=x(t)+\rho(t)z\in \partial B(x(t),\rho(t)), \quad Q(t):=x(t)-\rho(t)z\in\partial B(x(t),\rho(t)).$$
  We have that, for all $t\in(u_m,u_M)$, 
    \begin{equation}\label{const}
        u(P(t))=u(Q(t))=t
    \end{equation}
     and
    $$\frac{\partial u (P(t_0))}{\partial\nu_{t_0}}=\nabla u(P(t_0))\cdot z$$
    $$-\frac{\partial u (Q(t_0))}{\partial\nu_{t_0}}=\nabla u(Q(t_0))\cdot z.$$
  On the other hand, from \eqref{const}, we obtain 
    \begin{equation*}
        1=\frac{d}{dt}u(P(t))\lvert_{t_0}= \nabla u(P(t_0))\cdot P'(t_0)=\nabla u(P(t_0))\cdot z (\abs{y}+\rho'(t_0))
    \end{equation*}
    \begin{equation*}
        1=\frac{d}{dt}u(Q(t))\lvert_{t_0}= \nabla u(Q(t_0))\cdot Q'(t_0)=\nabla u(Q(t_0))\cdot z (\abs{y}-\rho'(t_0)),
    \end{equation*}
  and, consequently,
    \begin{equation}\label{conf2}
        \frac{\partial u}{\partial \nu_{t_0}}(P(t_0))(\abs{y}+\rho'(t_0)=-\frac{\partial u}{\partial \nu_{t_0}}(Q(t_0))(\abs{y}-\rho'(t_0).
    \end{equation}
    Moreover, by \eqref{conf1} we have 
    $$\frac{\partial u}{\partial \nu_{t_0}}(P(t_0))=\frac{\partial u}{\partial \nu_{t_0}}(Q(t_0))$$
    and, so, we have $|y|=0$, that is absurd.   

Thus, we have proved that $u$ is radially symmetric and, since 
$$\frac{\partial u}{\partial r}=\frac{\partial u}{\partial \nu}<0,$$
$u$ is decreasing along the radii and $u=u^\sharp$.
\end{proof}

\begin{oss}\label{diff}
In the proof of Theorem 1.1 the main difference 
 from the proof of the rigidity result in the Dirichlet case contained in  \cite{lions_remark} is Step 2. Indeed, in \cite{lions_remark}, the authors use the steepest descent lines method, which relays on the fact that $\abs{\nabla u}$ is constant on the level set of $u$, which is not a priori true in the Robin case. 
\end{oss}

\begin{proof}[Proof of Theorem \ref{teo2}]
Let us set $s=\pi r^2$. The assumption $u^\sharp(x)=v(x)$ for $\abs{x}=r$, implies
\begin{equation*}
   u^\ast(s)=v^\ast(s).
\end{equation*}
Arguing now as in the proof of Theorem \ref{Teo1}, we have
\begin{equation*}
    \begin{aligned}
    \frac{\abs{\Omega}-s}{4\pi}+\frac{\abs{\Om}^{\frac{1}{2}}}{2\sqrt{\pi}\beta}=v^\ast(s)=u^\ast(s)\le \frac{\abs{\Omega}-s}{4\pi} +\int_0^{u^\ast(s)}dr\int_{\partial U_r^{ext}} \frac{1}{u}\, d\mathcal{H}^1\le \\\leq 
    \frac{\abs{\Omega}-s}{4\pi} + \frac{1}{4\pi \beta u_m} \frac{\abs{\Om}}{\beta}= \frac{\abs{\Omega}-s}{4\pi}+\frac{\abs{\Om}^{\frac{1}{2}}}{2\sqrt{\pi}\beta},
    \end{aligned}
\end{equation*}
where in the last equality we have used the hypothesis $u_m=v_m$.
So, we have equality in \eqref{ust} and, consequently,  in \eqref{talentimu} for $\overline{t}:=u^\ast(s)$. 
As before, this implies that
\begin{itemize}
    \item $\{u> \overline{t}\}$ is a ball;
    \item $\mu(\overline{t})=\phi(\overline{t})$;
    \item the function $g$ defined in \eqref{fung} is constant on $\partial U_{\overline{t}}$.
\end{itemize}
Let us observe that, for all $\tau>v_m$
\begin{equation}
    \label{muint}
    \begin{aligned}
    &\int_0^\tau t \left(\int_{\partial U_t^{\text{ext}}} \frac{1}{u(x)} \, d\mathcal{H}^1\right)\, dt\le \int_0^{u_M} t \left(\int_{\partial U_t^{\text{ext}}} \frac{1}{u(x)} \, d\mathcal{H}^1\right) \, dt=\\
    &\int_{\partial\Omega} \left(\int_0^{u(x)} \frac{t}{u(x)}\, dt\right)\, d\mathcal{H}^1= \int_{\partial\Omega} \frac{u(x)}{2}= \frac{\abs{\Omega}}{2\beta},
    \end{aligned}
\end{equation}
while, for $v$ it holds

\begin{equation}
    \label{fiint}
    \int_0^\tau t \left(\int_{\partial V_t^{\text{ext}}} \frac{1}{v(x)} \, d\mathcal{H}^1\right)\, dt = \int_0^{v_m} t \left(\int_{\partial V_t^{\text{ext}}} \frac{1}{v(x)} \, d\mathcal{H}^1\right) \, dt=\frac{v_m P(\Omega^\sharp)}{2}=\frac{\abs{\Omega}}{2\beta},
\end{equation}
where the first equality follows from the fact that $\forall t> v_m$
$$\partial V_t^{\text{ext}}=\partial V_t\cap \partial\Omega=\emptyset.$$
If we multiply \eqref{talentimu} and \eqref{talentiphi} by $t$ and we integrate from $0$ to $\overline{t}$, we get
\begin{equation}\label{1}
    2\pi \overline{t}^2 \leq \int_0^{\overline{t}} t \left(- \mu'(t) + \frac{1}{\beta }\int_{\partial U_t^{ext}} \frac{1}{u(x)} \, d\mathcal{H}^1\right) \, dt \le \int_0^{\overline{t}} t \left(- \mu'(t)\right)\, dt + \frac{\abs{\Omega}}{2 \beta^2},
\end{equation}
where in the last inequality we use \eqref{muint}, and  we get
\begin{equation}\label{2}
    2\pi \overline{t}^2 = \int_0^{\overline{t}} t \left(- \phi'(t) + \frac{1}{\beta }\int_{\partial V_t^{ext}} \frac{1}{v(x)} \, d\mathcal{H}^1\right) \, dt = \int_0^{\overline{t}} t \left(- \phi'(t)\right)\, dt + \frac{\abs{\Omega}}{2 \beta^2},
\end{equation}
where in the last equality we use  \eqref{fiint}.
Therefore, combining \eqref{1} and \eqref{2}, we have that 
\begin{equation}\label{3}
    \int_0^{\overline{t}} t \left(- \mu'(t)\right)\, dt\ge \int_0^{\overline{t}} t \left(- \phi'(t)\right)\, dt,
\end{equation}
and, integrating by parts and recalling that $\mu(\overline{t})=\phi(\overline{t})$, we get
\begin{equation*}
    \int_0^{\overline{t}} \left( \mu(t)-\phi(t)\right)dt\ge 0. 
\end{equation*}
On the other hand, since \eqref{mu_phi} holds for all $t\ge 0$,  we have
\begin{equation*}
    \mu(t) =\phi (t),  \quad \forall t\in [0, \overline{t}]
\end{equation*}
 and this implies that equality holds in \eqref{talentimu} for all $t\in [0, \overline{t}]$.
Now, arguing as in Theorem \ref{Teo1}, 
we recover $\Omega=\Omega^\sharp+ x_0$ and $u(\cdot+x_0)=u^\sharp(\cdot)$ in $\Set{ r\le \abs{x}\le R}$. Finally, for the uniqueness of the solution to problem \eqref{rob2}, once we have that $\Omega$ is a ball, it follows that $u=v$ for all $x\in \Omega$. 
\end{proof}

As a particular case of the above result, if we take $r=0$, we have

\begin{corollario}
Let $\Om\subset\R^2$ be an open, bounded and Lipschitz set and let $\Om^\sharp$ be the ball, centered at the origin, having the same measure of $\Om$. Let $u$ be the solution to \eqref{rob1} and let $v$ be the solution to \eqref{rob2}. 
If $u_m=v_m$, and $u_M=v_M$, then $$\Om=\Om^\sharp+ x_0, \quad u(\cdot+x_0)=u^\sharp(\cdot) \, \,  \text{in } \Omega^\sharp.$$
\end{corollario}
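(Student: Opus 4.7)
The plan is to apply the chain of estimates used in the proof of Theorem~\ref{teo2} at the limiting value $r=0$; the hypothesis $u_M = v_M$ then plays the role of the pointwise equality $u^\sharp=v$ on a circle. From the explicit formula for $v$ one has $v^\ast(0) = v_M = \frac{\abs{\Om}}{4\pi} + \frac{\abs{\Om}^{1/2}}{2\sqrt\pi\,\beta}$, so the assumption $u_M = v_M$ is precisely $u^\ast(0) = v^\ast(0)$. Substituting $s = 0$ into \eqref{ust}, bounding $\frac{1}{u} \le \frac{1}{u_m}$ on $\partial\Om$ and invoking the layer-cake identity $\int_0^{u_M}\mathcal{H}^1(\partial U_r^{ext})\, dr = \int_{\partial\Om} u\, d\mathcal{H}^1 = \abs{\Om}/\beta$, one obtains
\begin{equation*}
v_M = u^\ast(0) \le \frac{\abs{\Om}}{4\pi} + \frac{1}{4\pi\beta}\int_0^{u_M}\!\!dr\int_{\partial U_r^{ext}}\!\! \frac{1}{u}\, d\mathcal{H}^1 \le \frac{\abs{\Om}}{4\pi} + \frac{\abs{\Om}}{4\pi\beta^2 u_m} = v_M,
\end{equation*}
the last equality following from $u_m = v_m = \frac{\abs{\Om}^{1/2}}{2\sqrt\pi\,\beta}$. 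Hence every inequality in the chain is an equality.

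The crucial consequence is the saturation of the bound $\frac{1}{u} \le \frac{1}{u_m}$ on $\partial\Om$. Integrating the non-negative quantity $\frac{1}{u_m} - \frac{1}{u}$ in $(r,x)$ and using the fact that $\partial U_r^{ext} = \partial\Om$ for every $r < u_m$, I conclude that $u \equiv u_m$ $\mathcal{H}^1$-a.e.\ on $\partial\Om$. Testing the weak formulation \eqref{weak-f} with $\varphi \equiv 1$ then yields $u_m P(\Om) = \int_{\partial\Om} u\, d\mathcal{H}^1 = \abs{\Om}/\beta$, while the analogous computation on $\Om^\sharp$ gives $v_m P(\Om^\sharp) = \abs{\Om}/\beta$. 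Since $u_m = v_m$, this forces $P(\Om) = P(\Om^\sharp)$, and the planar isoperimetric inequality implies $\Om = \Om^\sharp + x_0$ for some $x_0 \in \R^2$.

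Once $\Om$ is known to be a translate of $\Om^\sharp$, uniqueness of the Robin torsion solution on $\Om^\sharp + x_0$ gives $u(\cdot + x_0) = v(\cdot)$; since $v$ is radially symmetric and decreasing from the centre, it coincides with its own Schwartz rearrangement, so $u(\cdot + x_0) = u^\sharp(\cdot)$ in $\Om^\sharp$. The delicate step is the passage from the integrated equality $\frac{1}{u} = \frac{1}{u_m}$ on the family $\{\partial U_r^{ext}\}_{r \in (0,u_M)}$ to the pointwise identity $u \equiv u_m$ on $\partial\Om$; this is precisely where the observation $\partial U_r^{ext} = \partial\Om$ for $r < u_m$ is decisive. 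Once that is secured, no further rigidity argument in the spirit of Step~2 of Theorem~\ref{Teo1} is required, as uniqueness on the ball immediately takes over.
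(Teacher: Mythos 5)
Your proposal is correct, and it takes a genuinely different and arguably cleaner route than the paper. The paper derives the corollary as the endpoint case $r=0$ of Theorem~\ref{teo2}, and that proof ultimately re-enters the machinery of Theorem~\ref{Teo1}: establish $\mu\equiv\phi$, identify the level sets as balls (Step~1), and then run the concentricity argument via descent lines and equality in H\"older's inequality (Step~2). You avoid Step~2 altogether. Your key observation is that equality at $s=0$ in \eqref{ust} forces saturation of the bound $\frac{1}{u}\le\frac{1}{u_m}$ used in \eqref{chain}; concretely, after the Fubini swap one gets
\begin{equation*}
\int_{\partial\Omega}\left(\frac{u(x)}{u_m}-1\right)d\mathcal{H}^1=0,
\end{equation*}
and since the integrand is nonnegative, $u\equiv u_m$ $\mathcal{H}^1$-a.e.\ on $\partial\Omega$. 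Plugging $\varphi\equiv 1$ into \eqref{weak-f} then gives $u_m P(\Omega)=\frac{\abs{\Omega}}{\beta}=v_m P(\Omega^\sharp)$, so $u_m=v_m$ forces $P(\Omega)=P(\Omega^\sharp)$, and the equality case of the planar isoperimetric inequality immediately identifies $\Omega$ as a translate of $\Omega^\sharp$; uniqueness for \eqref{rob2} on the disk then supplies $u(\cdot+x_0)=v(\cdot)=u^\sharp(\cdot)$. What your approach buys is the complete avoidance of the descent-line contradiction argument, which is the hardest part of the paper's proof; the trade-off is that it exploits a feature special to the top and bottom levels (namely that $\partial U_t^{ext}=\partial\Omega$ for $t<u_m$), so it cannot replace Theorem~\ref{teo2} at a general $r\in(0,R)$. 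It is also worth noting --- and your chain already makes this visible --- that your boundary-saturation argument really only uses $u_m=v_m$ together with the chain of \cite[Remark~before~\eqref{minima_eq}]{ANT}-type estimates, so the hypothesis $u_M=v_M$ is not actually needed for the conclusion $\Omega=\Omega^\sharp+x_0$; the corollary's hypotheses are inherited from Theorem~\ref{teo2} rather than being sharp for this statement.
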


\begin{open}
\begin{itemize} Below we present a list of open problems and work in progress.
    \item Generalize the results contained in Theorem \ref{Teo1} and Theorem \ref{teo2} to higher dimension. In order to do that, one should prove \eqref{punt} in $\R^n$ for $n\ge 3$ (we adress to Open Problem $1$ in \cite{ANT}).
     \item Generalize the results contained in Theorem \ref{Teo1} and Theorem \ref{teo2} under weaker assumptions.
    \item Generalize the previous results to the $p-$Torsion or to the anisotropic Torsion.
\end{itemize}
\end{open}

\section*{Statements and Declarations}

\noindent \textbf{Funding:} The author  Gloria Paoli is supported by the Alexander von Humboldt Foundation with an Alexander von Humboldt research fellowship. 
\noindent The authors Alba Lia Masiello and Gloria Paoli are supported by GNAMPA of INdAM. 

\noindent \textbf{Conflict of interst:} There is no conflict of interest to disclose

\addcontentsline{toc}{chapter}{Bibliografia}

\printbibliography[heading=bibintoc, title={References}]

\end{document}